
\documentclass[11pt,a4paper,twoside]{amsart}
\usepackage[activeacute,english]{babel}
\usepackage[latin1]{inputenc}
\usepackage{amsfonts}
\usepackage{amsmath,amsthm}
\usepackage{amssymb}
\usepackage{graphics}


\newcommand{\CC}{{\mathcal C}}
\newcommand{\N}{{\mathbb N}}

\newcommand{\C}{{\mathbb C}}

\newcommand{\inte}{{\text{int}}}
\newcommand{\dime}{{\text{dim}}}

\newtheorem{teo}{Theorem}[section]
\newtheorem{lema}[teo]{Lemma}

\newtheorem{ques}[teo]{Question}

\title{Failure of rational approximation on some Cantor type sets}
\author{ALBERT MAS-BLESA}
\date{February 2008}
\subjclass{Primary 30C85; Secondary 31A15}
\keywords{Rational approximation, analytic capacity, Cantor sets}
\thanks{Supported by grant AP2006-02416 (Programa FPU del MEC, España). Also, partially supported by grants 2005SGR-007749 (Generalitat de Catalunya) and MTM2007-62817 (MEC, España)}
\address{Departament de Matem\`atiques, Universitat Aut\`onoma de Bar\-ce\-lo\-na, Spain}
\email{amblesa@mat.uab.cat}


\begin{document}

\begin{abstract}
Let $A(K)$ be the algebra of continuous functions on a compact set $K\subset\C$ which are analytic on the interior of $K$, and $R(K)$ the closure (with the uniform convergence on $K$) of the functions that are analytic on a neighborhood of $K$. A counterexample of a question made by A. O'Farrell about the equality of the algebras $R(K)$ and $A(K)$ when $K=(K_{1}\times[0,1])\cup([0,1]\times K_{2})\subseteq\C$, with $K_{1}$ and $K_{2}$ compact subsets of $[0,1]$, is given. Also, the equality is proved with the assumption that $K_{1}$ has no interior.
\end{abstract}

\maketitle

\section{Introduction}
Consider a compact set $K$ of the complex plane. Let $A(K)$ be the algebra of continuous functions on $K$ which are analytic on the interior of $K$, and $R(K)$ the closure (with the uniform convergence on $K$) of the functions that are analytic on a neighborhood of $K$. Obviously, $R(K)\subseteq A(K)$.

In the 60's, Vitushkin gave a description in analytic terms
of the compact sets $K$ for which $R(K)=A(K)$ (see \cite{Vitushkin}), but there is still no characterization of those compact sets in a geometric way. Nevertheless, there have been important advances in this area recently, as can be seen in the articles of Xavier Tolsa \cite{Tolsa1} and
\cite{Tolsa2} and the one of Guy David \cite{David}. In this direction, Anthony G. O'Farrell raised the following question (private communication):
\begin{ques}\label{ques}
Let $K_{1}$ and $K_{2}$ be two compact subsets of $[0,1]$ and define $K=(K_{1}\times[0,1])\cup([0,1]\times K_{2})\subseteq\C.$ Is it true that $R(K)=A(K)$?
\end{ques}
It is known that the identity holds if one of the compact sets $K_{1}$ or $K_{2}$ has no interior. For completeness, we include a proof of that fact at the end of the paper. However, it was not known whether the identity holds or not in general.  In this paper we provide an example of a compact set $K$ which gives a negative answer to the question. The set $K$ is constructed as follows:

Let $\mathcal{C}(1/3)$ be the ternary Cantor set on the interval $[0,1]$, i.e., $$\mathcal{C}(1/3)=\bigcap_{n=0}^{\infty}\bigcup_{j=1}^{2^{n}}I_{n}^{j},$$ where
$I_{0}^{1}=[0,1]$ and each $I_{n}^{j}$ is an interval of length $3^{-n}$ obtained by dividing the intervals of length $3^{-n+1}$ in three equal parts and excluding the central part. Call $z_{n}^{j}$ the center of $I_{n}^{j}$. Consider a sequence $\delta_{n}>0$ such that $\delta_{n}<3^{-n-1}$ and define $J_{n}^{j}=(z_{n}^{j}-\delta_{n}/2,z_{n}^{j}+\delta_{n}/2)$, where $z_{n}^{j}$ is the center of $I_{n}^{j}$. Let $$E_{m}=[0,1]\setminus\bigcup_{n=0}^{m}\bigcup_{j=1}^{2^{n}}J_{n}^{j}.$$
Finally, define $F_{m}=(E_{m}\times[0,1])\cup([0,1]\times E_{m}) \subseteq\mathbb{C}$ and put
$K=\bigcap_{m=0}^{\infty}F_{m}$.

With this construction of $K$ we will prove the main result of the paper:
\begin{teo}\label{main teo}
For a suitable choice of the sequence $\delta_{n}$, $R(K)\neq A(K).$
\end{teo}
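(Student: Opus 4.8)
The plan is to derive the non-equality from Vitushkin's criterion (see \cite{Vitushkin}): $R(K)=A(K)$ if and only if $\alpha(U\setminus K)=\alpha(U\setminus\inte K)$ for every open set $U$, where $\alpha$ denotes continuous analytic capacity. Since $U\setminus K\subseteq U\setminus\inte K$ always gives $\alpha(U\setminus K)\le\alpha(U\setminus\inte K)$, it is enough to produce, for one suitable choice of the $\delta_n$, a single open disk on which the inequality is strict. First I would unwind the construction. Since $E_0\supseteq E_1\supseteq\cdots$, one has $K=\bigcap_m F_m=(E\times[0,1])\cup([0,1]\times E)$ with $E=\bigcap_m E_m=[0,1]\setminus S$ and $S=\bigcup_{n,j}J_n^j$, hence $K^c=(\C\setminus[0,1]^2)\cup(S\times S)$ and $\C\setminus\inte K=\overline{K^c}\supseteq\overline S\times\overline S$. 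The crucial features are: $J_n^j$ lies in the open middle third of $I_n^j$ (this is where $\delta_n<3^{-n-1}$ is used), so $S\cap\CC(1/3)=\emptyset$; and $\overline S\supseteq\CC(1/3)$. Consequently $\CC(1/3)\times\CC(1/3)\subseteq K$ while at the same time $\CC(1/3)\times\CC(1/3)\subseteq\C\setminus\inte K$, and likewise every rescaled piece $(\CC(1/3)\cap I)\times(\CC(1/3)\cap I)$ with $I$ a Cantor interval.

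Next I would record the lower bound $c_0:=\alpha\big(\CC(1/3)\times\CC(1/3)\big)>0$. This holds because $\CC(1/3)\times\CC(1/3)$ is Ahlfors regular of dimension $s=2\log2/\log3>1$ and therefore carries a probability measure $\mu$ with $\mu(\Delta(z,r))\le Cr^{s}$ for all $z,r$; since $s>1$ its Cauchy transform is bounded and continuous on $\C$, analytic off $\CC(1/3)\times\CC(1/3)$, vanishes at $\infty$ and has nonzero derivative there, so a multiple of it is an admissible function for $\alpha$. By translation invariance and homogeneity of $\alpha$ it follows that $\alpha\big((\CC(1/3)\cap I)\times(\CC(1/3)\cap I)\big)=3^{-k}c_0$ whenever $I$ is a level-$k$ Cantor interval.

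I would then fix an interior Cantor point, say $z_0=(1/4,1/4)$ --- note that $1/4=0.\overline{02}$ in base $3$ lies in $\CC(1/3)$ --- and denote by $I_k$ the level-$k$ Cantor interval containing $1/4$. For $n$ large, test the disk $\Delta_n=\Delta(z_0,\tfrac12 3^{-n})\subseteq(0,1)^2$. It contains $I_{n+1}\times I_{n+1}$, so
\[ \alpha(\Delta_n\setminus\inte K)\ \ge\ \alpha\big((\CC(1/3)\cap I_{n+1})\times(\CC(1/3)\cap I_{n+1})\big)\ =\ 3^{-n-1}c_0 . \]
On the other side, $\Delta_n\setminus K=\Delta_n\cap(S\times S)\subseteq B_n\times B_n$ with $B_n=S\cap(\tfrac14-\tfrac12 3^{-n},\tfrac14+\tfrac12 3^{-n})$; a short base-$3$ computation shows that $\tfrac14$ is at distance $\tfrac34 3^{-n}$ from $\partial I_{n-1}$, so this interval lies inside $I_{n-1}$, and any $J_\ell^k$ with $I_\ell^k\not\subseteq I_{n-1}$ is disjoint from $I_{n-1}$ (because the middle third of $I_\ell^k$ is). Hence $B_n$ is covered by the $J_\ell^k$ contained in $I_{n-1}$, so $|B_n|\le\sum_{\ell\ge n-1}2^{\ell-n+1}\delta_\ell$. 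Using the classical inequality $\gamma(F)\le(|F|/\pi)^{1/2}$ together with $\alpha\le\gamma$,
\[ \alpha(\Delta_n\setminus K)\ \le\ \gamma(B_n\times B_n)\ \le\ \pi^{-1/2}\,|B_n|\ \le\ \pi^{-1/2}\sum_{\ell\ge n-1}2^{\ell-n+1}\delta_\ell . \]
Now choose the $\delta_n$ so that $3^{n}\sum_{\ell\ge n}2^{\ell-n}\delta_\ell\to 0$ while keeping $\delta_n<3^{-n-1}$; for instance $\delta_\ell=3^{-\ell-1}/(\ell+1)$ works. Then $\alpha(\Delta_n\setminus K)/\alpha(\Delta_n\setminus\inte K)\to 0$, so for $n$ large the inequality is strict, and Vitushkin's criterion yields $R(K)\neq A(K)$.

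The main obstacle, and what forces the shape of the hypothesis, is making the two capacity estimates quantitatively compatible on the same disks. The interior side needs a genuinely planar lower bound $\alpha\big(\CC(1/3)\times\CC(1/3)\big)>0$, which works precisely because the \emph{product} Cantor set has dimension $>1$; the complement side exploits the fact that the complement of this ``plus/grid'' set is the product $S\times S$ of two short one-dimensional sets, whose planar measure --- hence whose analytic capacity at scale $3^{-n}$ --- can be made negligible compared with $3^{-n}$ by shrinking the $\delta_n$. Besides that, the slightly delicate point to nail down is the last geometric claim: that at scale $3^{-n}$ around $z_0$ the set $S$ contributes only the intervals $J_\ell^k$ lying inside the single Cantor interval $I_{n-1}$.
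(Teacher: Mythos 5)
Your overall strategy matches the paper's --- Vitushkin's criterion, a lower bound $\alpha\bigl(\CC(1/3)\times\CC(1/3)\bigr)>0$ coming from the product Cantor set having dimension greater than $1$, and an upper bound showing that the capacity of the complement $[0,1]^2\setminus K=S\times S$ can be made small by shrinking the $\delta_n$ --- and your localization to disks around $(1/4,1/4)$, the base-$3$ computation for $B_n$, and the final choice of $\delta_\ell$ are all fine. The fatal problem is the upper bound. The inequality $\gamma(F)\le(|F|/\pi)^{1/2}$, with $|F|$ the area, is the Ahlfors--Beurling inequality written backwards: the true classical statement is $\gamma(F)\ge(\mathrm{Area}(F)/\pi)^{1/2}$, a \emph{lower} bound. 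No upper bound of this form can hold --- a line segment has zero area and positive analytic capacity, and indeed the very set $\CC(1/3)\times\CC(1/3)$ on whose positive capacity your lower bound rests has zero area. So the step $\alpha(\Delta_n\setminus K)\le\pi^{-1/2}|B_n|$ is unjustified.

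Nor is it easily repaired. The standard upper bound actually available is $\alpha\le\mathcal{M}^{1}$ (one-dimensional Hausdorff content), and $\mathcal{M}^{1}(B_n\times B_n)$ is \emph{not} controlled by the length of $B_n$: the set $B_n\times B_n$ is a union of infinitely many rectangles $J_\ell^{k}\times J_{\ell'}^{k'}$ living at all pairs of scales $\ell,\ell'\ge n-1$, and the naive covering by these rectangles gives a divergent sum (already for a union $B$ of $N$ well-separated intervals of length $\varepsilon$, the content of $B\times B$ can be of order $N^{2}\varepsilon$, much larger than $|B|=N\varepsilon$). Estimating exactly this quantity is the real work of the paper: its two covering lemmas organize $[0,1]^{2}\setminus K$ into crosses and cover each cross by squares of side comparable to the local $\delta$, arriving at a bound of the form $8\sum_{n}4^{n}3^{n(\eta-1)}\delta_n^{\eta}$ with $\eta=1-1/\log_2 3\in(0,1)$; note the unavoidable loss in the exponent of $\delta_n$, which your bound, linear in $|B_n|$, would obtain for free. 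Your proposal is missing this ingredient, and once a correct multi-scale covering is supplied the argument essentially becomes the paper's.
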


In the whole paper $\mathcal{M}^{1}$ stands for the {\em1-dimensional Hausdorff content} and $\alpha$ denotes the {\em continuous analytic capacity} (see \cite{Vitushkin}). Remember that, given a compact set $F\subseteq\C$,
$$\alpha(F) = \sup|f'(\infty)|,$$
where the supremum is taken over all continuous functions
$f:\C\longrightarrow \C$ which are analytic on $\C\setminus F$, and uniformly bounded by $1$ on $\C$. If $f$ satisfies all these properties, we say that $f$ is {\em admissible} for $\alpha$ and $F$. By definition, $f'(\infty)=\lim_{z\rightarrow\infty}z(f(z)-f(\infty))$.

\begin{figure}[ht]
\begin{center}
\scalebox{1.6}{\includegraphics{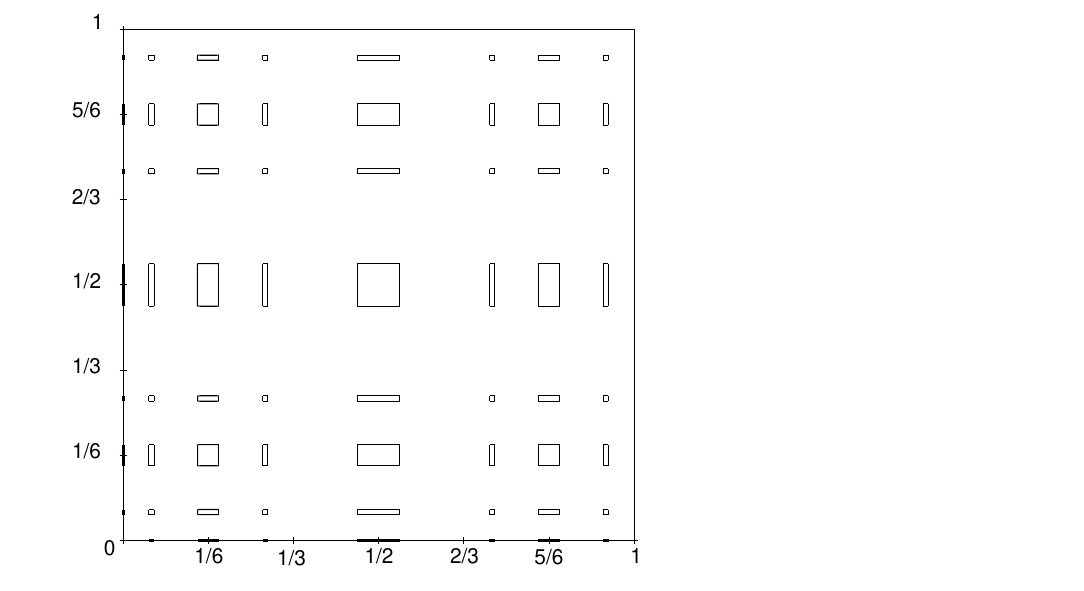}}
\caption{\label{fig1} This is a picture of the compact set $F_{2}$. The rectangles inside $[0,1]^{2}$ are the holes of $F_{2}$, and the bold lines on the sides of $[0,1]^{2}$ correspond to the subset of the real line $\bigcup_{n=0}^{2}\bigcup_{j=1}^{2^{n}}J_{n}^{j}$.}
\end{center}
\end{figure}

\section{Proof of the main result}
In the two following lemmas, we shall obtain some estimates of the Hausdorff content of $[0,1]^{2}\setminus K$ that will be useful to show that the algebras $R(K)$ and $A(K)$ are not equal for a suitable choice of the sequence $\delta_{n}$.

\begin{lema}
Fix $n_{0}\in\mathbb{N}$ and $\delta>0$ such that $\delta<3^{-n_{0}+2}$. Define
$\tilde{J}_{n}^{j}=(z_{n}^{j}-\delta/2,z_{n}^{j}+\delta/2)$, $R_{n}^{j}=\tilde{J}_{n}^{j}\times [0,\delta]$ and
$$R=\bigcup_{n=0}^{n_{0}}\bigcup_{j=1}^{2^{n}}R_{n}^{j}.$$
Then $\mathcal{M}^{1}(R)< 8\delta^{\eta}$, where $\eta=1-\frac{1}{\log_{2}3}>0$.
\end{lema}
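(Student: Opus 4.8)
The plan is to build, for each scale, an explicit countable cover of $R$ and then optimize the scale. Fix an integer $k$ with $0\le k\le n_{0}$ and split the rectangles $R_{n}^{j}$ into the \emph{coarse} ones ($n<k$) and the \emph{fine} ones ($n\ge k$). The coarse rectangles I would cover individually: there are $\sum_{n=0}^{k-1}2^{n}<2^{k}$ of them, each a $\delta\times\delta$ square of diameter $\sqrt{2}\,\delta$, contributing less than $\sqrt{2}\,2^{k}\delta$. For the fine ones I would use the nested structure of the ternary Cantor construction: since $\bigcup_{j}I_{n}^{j}\subseteq\bigcup_{j}I_{k}^{j}$ whenever $n\ge k$, and $\tilde{J}_{n}^{j}$ is centred at the centre of $I_{n}^{j}$, every $\tilde{J}_{n}^{j}$ with $n\ge k$ lies in the $\delta/2$-neighbourhood of some $I_{k}^{j'}$. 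Hence all the fine rectangles together are contained in the union of the $2^{k}$ "strips" obtained by enlarging each $I_{k}^{j'}$ by $\delta/2$ on each side and multiplying by $[0,\delta]$, each of diameter at most $3^{-k}+2\delta$. Adding these estimates,
\[
\mathcal{M}^{1}(R)\;\le\;2^{k}\bigl(3^{-k}+2\delta\bigr)+\sqrt{2}\,2^{k}\delta\;=\;(2/3)^{k}+(2+\sqrt{2})\,2^{k}\delta .
\]

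Next I would optimize over $k$. The two terms balance when $3^{-k}$ is comparable to $\delta$, that is, when $k$ is close to $\log_{3}(1/\delta)$; this is the heart of the matter, because $3^{k}\approx 1/\delta$ forces $2^{k}\approx\delta^{-\log_{3}2}$, so both terms become of order $\delta^{\,1-\log_{3}2}=\delta^{\eta}$ (here $\log_{3}2=1/\log_{2}3$, so indeed $\eta=\log_{3}(3/2)>0$). Concretely I would take $k=\lfloor\log_{3}(1/(c\delta))\rfloor$ for an appropriate absolute constant $c$ and bound $(2/3)^{k}$ and $2^{k}\delta$ by elementary interval arithmetic.

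It remains to handle the constraint $k\le n_{0}$. If $\delta$ is so small that the balancing value of $k$ would exceed $n_{0}$ — equivalently, $\delta$ is much smaller than $3^{-n_{0}}$ — then there are no fine rectangles worth grouping into strips, and I would simply cover all $\sum_{n=0}^{n_{0}}2^{n}<2^{n_{0}+1}$ squares individually. This is exactly where the hypothesis $\delta<3^{-n_{0}+2}$ is used: it gives $n_{0}<2+\log_{3}(1/\delta)$, hence $2^{n_{0}}<4\,\delta^{-\log_{3}2}$, so $\mathcal{M}^{1}(R)<\sqrt{2}\,2^{n_{0}+1}\delta<8\sqrt{2}\,\delta^{\eta}$; and since in this regime $3^{-n_{0}}\gg\delta$ with a large gap, the constant in fact comes out well below $8$. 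Combining the two regimes and tracking the constants yields $\mathcal{M}^{1}(R)<8\,\delta^{\eta}$. I expect the conceptual step — choosing the scale $k$ so that the exponent lands on exactly $\eta$ — to be the real content; the only delicate point is the bookkeeping of the absolute constants (including the boundary case $k=n_{0}$) needed to reach the precise bound $8\delta^{\eta}$ rather than merely $O(\delta^{\eta})$.
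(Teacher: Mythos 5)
Your argument is sound and would yield $\mathcal{M}^{1}(R)\le C\delta^{\eta}$, but it is a genuinely different and much heavier route than the paper's. The paper simply covers all $\sum_{n=0}^{n_{0}}2^{n}=2^{n_{0}+1}-1$ squares individually, getting $\mathcal{M}^{1}(R)\le\delta\,2^{n_{0}+1}$, and then feeds in the hypothesis $\delta<3^{-n_{0}+2}$, i.e.\ $n_{0}<2-\log_{3}\delta$, to obtain $\delta\,2^{n_{0}+1}<8\delta^{1-1/\log_{2}3}$ directly. In your language: the hypothesis forces $n_{0}$ to lie within two generations of the balancing scale $\log_{3}(1/\delta)$, so only your ``all coarse'' regime ever occurs and the naive cover is already optimal up to a bounded factor; the coarse/fine split, the grouping of the fine rectangles into Cantor strips, and the optimization over $k$ are correct but unnecessary here. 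What your argument buys is generality --- it would handle $n_{0}$ unrelated to, or much larger than, $\log_{3}(1/\delta)$, where the naive count diverges --- and it explains where the exponent $\eta=1-\log_{3}2$ really comes from; what it costs, besides length, is the constant: with the diameter normalization of $\mathcal{M}^{1}$ your bookkeeping gives about $(6+2\sqrt{2})\delta^{\eta}$ in the range $3^{-n_{0}}\le\delta<3^{-n_{0}+2}$, which overshoots $8\delta^{\eta}$, and you explicitly leave that loose end open. The paper avoids the $\sqrt{2}$ by charging each $\delta$-square only $\delta$ of content, and in any case it remarks immediately after the lemma that only the decay in $\delta$ matters, not the exact constant $8$; so this is a cosmetic issue, not a gap.
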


\begin{proof}

Since $R$ is the union of the squares $R_{n}^{j}$ for $0\leq n\leq n_{0}$ and $1\leq j\leq2^{n}$ and each square has side length $\delta$, we have
\begin{eqnarray*}
\mathcal{M}^{1}(R)\leq\sum_{n=0}^{n_{0}}\sum_{j=1}^{2^{n}}\delta=\delta(2^{n_{0}+1}-1)\leq \delta2^{n_{0}+1}.
\end{eqnarray*}
The inequality $\delta<3^{-n_{0}+2}$ is equivalent to $n_{0}<2-\log_{3}{\delta}$. Then, using that
$\log_{3}{\delta}=\log_{2}{\delta}/\log_{2}{3}$, we can deduce that
$$\delta2^{n_{0}+1}<\delta2^{3-\log_{3}{\delta}}=\delta2^{3-\frac{\log_{2}{\delta}}{\log_{2}{3}}}=
8\delta^{1-\frac{1}{\log_{2}{3}}}=8\delta^{\eta}.$$
\end{proof}

As we will see in the proof of the following lemma, the important fact of the preceding one is that $\mathcal{M}^{1}(R)$ is bounded by something that tends to zero as $\delta$ decreases, rather than the exact value of the bound.

\begin{lema}
For every $\varepsilon>0$ there exists a sequence $\delta_{n}$ such that $$\mathcal{M}^{1}([0,1]^{2}\setminus K)< \varepsilon.$$
\end{lema}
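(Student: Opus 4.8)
The plan is to first describe $[0,1]^{2}\setminus K$ explicitly and then cover it efficiently using the previous lemma. Set $Y:=\bigcup_{n\geq 0}\bigcup_{j=1}^{2^{n}}J_{n}^{j}\subseteq[0,1]$, the union of all the ``holes'' removed along the segment, so that $\bigcap_{m}E_{m}=[0,1]\setminus Y$. Since the sets $E_{m}$ decrease with $m$, a point $(x,y)\in[0,1]^{2}$ belongs to $F_{m}$ for every $m$ precisely when $x\in\bigcap_{m}E_{m}$ or $y\in\bigcap_{m}E_{m}$; hence
$$[0,1]^{2}\setminus K=Y\times Y=\bigcup_{n,n'\geq 0}\ \bigcup_{1\leq j\leq 2^{n},\,1\leq j'\leq 2^{n'}}J_{n}^{j}\times J_{n'}^{j'}.$$
I would also record that the $J_{n}^{j}$ are pairwise disjoint (each $J_{n}^{j}$, being of width $\delta_{n}<3^{-n-1}$ and centred at the centre of $I_{n}^{j}$, lies in the deleted middle third of $I_{n}^{j}$, hence meets no finer Cantor interval), so every point of $Y$ has a well defined ``level''; this is what makes the decomposition below clean.

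The heart of the argument is a single estimate for one horizontal strip. Fix $n'$ and $j'$, and let $n_{0}=n_{0}(n')$ be the largest integer with $\delta_{n'}<3^{-n_{0}+2}$; since $\delta_{n'}<3^{-n'-1}$ one has $n_{0}\geq n'$. I claim that, provided the $\delta_{n}$ are chosen \emph{decreasing},
$$\mathcal{M}^{1}\Big(\Big(\bigcup_{n\geq n'}\bigcup_{j}J_{n}^{j}\Big)\times J_{n'}^{j'}\Big)\leq C\,\delta_{n'}^{\,\eta}$$
for an absolute constant $C$. To prove this I would split the $x$-holes of level $\geq n'$ into those of level $n'\leq n\leq n_{0}$ and those of level $n>n_{0}$. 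For the first group, since $\delta_{n}\leq\delta_{n'}$ for $n\geq n'$, each $J_{n}^{j}$ is contained in the interval of width $\delta_{n'}$ about $z_{n}^{j}$; thus this part of the strip is, after a vertical translation, contained in the set $R$ of the previous lemma taken with parameters $n_{0}$ and $\delta=\delta_{n'}$ (the hypothesis $\delta_{n'}<3^{-n_{0}+2}$ holds by the choice of $n_{0}$), so its content is $<8\delta_{n'}^{\,\eta}$. For the second group, every $J_{n}^{j}$ with $n>n_{0}$ lies inside a level-$n_{0}$ Cantor interval, so this part is covered by the $2^{n_{0}}$ rectangles $I_{n_{0}}^{k}\times J_{n'}^{j'}$; covering each of these by one ball and using that the choice of $n_{0}$ forces $3^{-n_{0}}$ comparable to $\delta_{n'}$ and $2^{n_{0}}$ comparable to $\delta_{n'}^{-1/\log_{2}3}$ (which is exactly where the exponent $\eta$ enters), one gets $2^{n_{0}}(3^{-n_{0}}+\delta_{n'})=O(\delta_{n'}^{\,\eta})$ as well.

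Next I would assemble these strip estimates. Split the union above according to whether $n\geq n'$ or $n<n'$. The part with $n\geq n'$ equals $\bigcup_{n',j'}\big(\bigcup_{n\geq n',j}J_{n}^{j}\big)\times J_{n'}^{j'}$; the part with $n<n'$ is, under the reflection across the diagonal (an isometry of $\C$, hence preserving $\mathcal{M}^{1}$), carried into a set of the same form with the two coordinates interchanged, and is therefore contained in the same type of union. Applying the strip estimate, summing over the $2^{n'}$ values of $j'$ and then over $n'$, and doubling to account for the reflected part, gives
$$\mathcal{M}^{1}\big([0,1]^{2}\setminus K\big)\leq 2C\sum_{n'=0}^{\infty}2^{n'}\delta_{n'}^{\,\eta}.$$
Since $\eta>0$, it then suffices to take the sequence $\delta_{n}$ decreasing, with $\delta_{n}<3^{-n-1}$, and small enough that $2^{n}\delta_{n}^{\,\eta}\leq\varepsilon\,4^{-n}$ for every $n$ (e.g.\ $\delta_{n}=\min\{3^{-n-1}/2,\,(\varepsilon\,2^{-2n}/C')^{1/\eta}\}$ with $C'$ large); then the series is $<\varepsilon$.

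I expect the main difficulty to be the strip estimate, and in particular the need to group rectangles before estimating: a naive term-by-term bound diverges, since one thick hole $J_{n}^{j}$ paired with the holes of all deeper levels already contributes on the order of $\delta_{n}\cdot\infty$. This is why the rectangles must be organized by the narrower ($y$-)side and the previous lemma invoked for the bounded block of levels, and why the unbounded tail of deeper $x$-levels must be handled by the Cantor-interval cover with the sharp cutoff $n_{0}\approx\log_{3}(1/\delta_{n'})$ — that cutoff is precisely what makes the tail contribution of the same order $\delta_{n'}^{\,\eta}$ rather than something non-summable. The reflection step is likewise not cosmetic: it guarantees that a strip of width $\delta_{n'}$ is only ever paired with holes of width at most $\delta_{n'}$, which is exactly the situation covered by the previous lemma.
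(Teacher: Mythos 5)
Your proof is correct and follows essentially the same strategy as the paper's: identify $[0,1]^{2}\setminus K$ as a union of rectangles $J_{n}^{j}\times J_{n'}^{j'}$, use the diagonal symmetry to reduce to strips whose narrow side has the larger width $\delta_{n'}$, cover each strip via the previous lemma, and then choose $\delta_{n}$ to make the resulting series $\sum_{n}2^{n}\delta_{n}^{\eta}$ (the same bound the paper obtains, since $4^{n}3^{n(\eta-1)}=2^{n}$) smaller than $\varepsilon$. The only organizational difference is that the paper localizes to crosses inside the level-$n$ Cantor squares and rescales by $3^{n}$ before invoking the lemma, which silently absorbs your ``deep tail'' of levels $n>n_{0}$ into the squares $R_{n_{0}}^{k}$, whereas you handle that tail with an explicit Cantor-interval cover --- a point the paper leaves terse and which your write-up makes more transparent.
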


\begin{proof}
Put $G=[0,1]^{2}\setminus K$. Consider the crosses $P_{n}^{k}$ for $k=1,\ldots,4^{n}$ defined in the following way (see Figure \ref{fig1} to understand the construction):
\vskip-2pt
\begin{align*}
&P_{0}^{1}=(J_{0}^{1}\times[0,1])\cup([0,1]\times J_{0}^{1}),\\
\text{ }\\
&P_{1}^{1}=(J_{1}^{1}\!\times\![0,1/3])\cup([0,1/3]\!\times\! J_{1}^{1}),\;
P_{1}^{2}=(J_{1}^{2}\!\times\![0,1/3])\cup([2/3,1]\!\times\! J_{1}^{1}),\\
&P_{1}^{3}=(J_{1}^{1}\!\times\![2/3,1])\cup([0,1/3]\!\times\! J_{1}^{2}),\;
P_{1}^{4}=(J_{1}^{2}\!\times\![2/3,1])\cup([2/3,1]\!\times\! J_{1}^{2}),\\
\text{ }\\
&P_{2}^{1}=(J_{2}^{1}\!\times\![0,1/9])\cup([0,1/9]\!\times\! J_{2}^{1}),\,
P_{2}^{2}=(J_{2}^{2}\!\times\![0,1/9])\cup([2/9,1/3]\!\times\! J_{2}^{1}),\\
&P_{2}^{3}=(J_{2}^{3}\!\times\![0,1/9])\cup([2/3,7/9]\!\times\! J_{2}^{1}),\,
P_{2}^{4}=(J_{2}^{4}\!\times\![0,1/9])\cup([8/9,1]\!\times\! J_{2}^{1}),\\
&P_{2}^{5}=(J_{2}^{1}\!\times\![2/9,1/3])\cup([0,1/9]\!\times\! J_{2}^{2}),\,\ldots
\end{align*}
\vskip10pt
It is clear that $G\subseteq\bigcup_{n=0}^{\infty}\bigcup_{k=1}^{4^{n}}P_{n}^{k}$. By construction, we also have
$\mathcal{M}^{1}(P_{n}^{1}\cap G)=\mathcal{M}^{1}(P_{n}^{k}\cap G)$ for all $k=1,\ldots,4^{n}$.
Therefore, $$\mathcal{M}^{1}(G)\leq \sum_{n=0}^{\infty}4^{n}\mathcal{M}^{1}(P_{n}^{1}\cap G).$$
Call $X_{n}$ the horitzontal strip of the cross $P_{n}^{1}$ and $Y_{n}$ the vertical one. Because of the symmetry of the compact set $K$ and the subadditivity of $\mathcal{M}^{1}$,
$$\mathcal{M}^{1}(P_{n}^{1}\cap G)\leq2\mathcal{M}^{1}(X_{n}\cap G).$$

Observe that $G$ is a countable union of rectangles, and on $X_{n}$ all those rectangles have the sides of length less or equal than $\delta_{n}$. So, the set $3^{n}(X_{n}\cap G):=\{3^{n}x:\;x\in X_{n}\cap G\}$ can be included by a translation in a set $R:=\bigcup_{n=0}^{n_{0}}\bigcup_{j=1}^{2^{n}}R_{n}^{j}$ like the one of the preceding lemma, if we take $\delta=3^{n}\delta_{n}$ and $n_{0}\in\mathbb{N}$ such that $3^{-n_{0}+1}\leq\delta<3^{-n_{0}+2}$. Applying the lemma we obtain,
$$\mathcal{M}^{1}(X_{n}\cap G)<3^{-n}8(3^{n}\delta_{n})^{\eta}=3^{n(\eta-1)}8\delta_{n}^{\eta}$$
with $\eta=1-\frac{1}{\log_{2}3}$, and then,
$$\mathcal{M}^{1}(G)\leq 8\sum_{n=0}^{\infty}4^{n}\mathcal{M}^{1}(X_{n}\cap G)< 8\sum_{n=0}^{\infty}4^{n}3^{n(\eta-1)}\delta_{n}^{\eta}.$$
Given $\varepsilon>0$, it is easy to find a decreasing sequence $\delta_{n}$ that makes the last sum less than $\varepsilon$, because $\eta>0$.
\end{proof}

\begin{proof}[Proof of theorem \ref{main teo}]
As Vitushkin proved in \cite{Vitushkin} (see also \cite{Gamelin-unif}, theorem VIII.8.2), $R(K)=A(K)$ if and only if  $\alpha(D\setminus K)=\alpha(D\setminus \inte K)$ for every bounded open set $D$.

If $\mathcal{C}=\mathcal{C}(1/3)\times\mathcal{C}(1/3)$, we known that $\alpha(\mathcal{C})>0$ because $\dime (\CC)>1$, where $\dime(\cdot)$ denotes the Hausdorff dimension. Observe that $\CC\subseteq\partial K$ and it does not depend on the chosen sequence $\delta_{n}$. This implies that $\alpha(\partial K)\geq\alpha(\CC)$, so it is guarantied a minimum of continuous analytic capacity on the boundary of $K$ for any sequence $\delta_{n}$.

Observe also that $\alpha([0,1]^{2}\setminus\inte K)=\alpha((0,1)^{2}\setminus\inte K)$  because $\partial([0,1]^{2})$ is {\em negligible} (see \cite{Gamelin-unif}, chapter VIII). Therefore,
$$\alpha(\CC)\leq\alpha(\partial K)\leq\alpha([0,1]^{2}\setminus\inte K)=\alpha((0,1)^{2}\setminus\inte K).$$

On the other hand, by the preceding lemma we can find a sequence $\delta_{n}$ such that
$\mathcal{M}^{1}([0,1]^{2}\setminus K)\leq\alpha(\CC)/2.$ If we take into account that $\alpha\leq\mathcal{M}^{1}$, we can deduce that
$$\alpha((0,1)^{2}\setminus\ K)\leq\mathcal{M}^{1}([0,1]^{2}\setminus K)\leq\alpha(\CC)/2
<\alpha(\CC)\leq\alpha((0,1)^{2}\setminus\inte K).$$
These inequalities show that the necessary condition for $R(K)=A(K)$ in Vitushkin's theorem does not hold for $D=(0,1)^{2}$. So, for that sequence $\delta_{n}$ we have $R(K)\neq A(K)$.
\end{proof}

\section{$A(K)=R(K)$ when $K_{1}$ has no interior}
Now, as we said at the beginning of the paper, we proceed to give an affirmative answer to the question \ref{ques} with the assumption that $K_{1}$ has no interior. We need an auxiliary lemma that we guess is already known, so we only sketch the proof.

\begin{lema}
Fix $\delta>0$ and $n\in\N$. Let $R$ be a rectangle with sides of length $\delta$ and $n\delta$ and put $R=\bigcup_{j=1}^{n}Q_{j}$, where $Q_{j}$ squares of side length $\delta$ with pairwise disjoint interiors. Let $E_{j}\subseteq Q_{j}$ and suppose there exists $C_{0}>0$ such that $\alpha(E_{j})\geq C_{0}\delta$ for all $j$. Then, there exists a constant $C_{1}>0$ depending only on $C_{0}$ such that
$$\sum_{j=1}^{n}\alpha(E_{j})\leq C_{1}\alpha(\bigcup_{j=1}^{n} E_{j}).$$
\end{lema}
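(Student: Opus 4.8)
The plan is to prove this by comparing $\sum_{j=1}^{n}\alpha(E_{j})$ with $n\delta$ (the diameter order of the whole rectangle $R$) on one side, and $\alpha(\bigcup_{j}E_{j})$ with a quantity of the same order on the other side. For the first comparison, the hypothesis $\alpha(E_{j})\ge C_{0}\delta$ together with the trivial bound $\alpha(E_{j})\le\mathcal{M}^{1}(E_{j})\le\mathcal{M}^{1}(Q_{j})\le c\,\delta$ (a square of side $\delta$ has $1$-dimensional Hausdorff content comparable to $\delta$) gives $C_{0}\delta\le\alpha(E_{j})\le c\,\delta$ for every $j$, hence $C_{0}\,n\delta\le\sum_{j=1}^{n}\alpha(E_{j})\le c\,n\delta$. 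So it suffices to show $\alpha\bigl(\bigcup_{j=1}^{n}E_{j}\bigr)\ge c' n\delta$ for a constant $c'$ depending only on $C_{0}$, and then take $C_{1}=c/c'$.

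The core of the argument is therefore a lower bound for the analytic capacity of the union. Here I would invoke the known semiadditivity-type / comparability results for analytic capacity. The cleanest route is to use Tolsa's characterization of analytic capacity (or of continuous analytic capacity $\alpha$) in terms of curvature of measures, or more elementarily the fact that $\alpha(F)\gtrsim \mathcal{M}^{1}(F)$ fails in general but holds for sets that are "spread out" along a line. Concretely: the sets $E_{j}$ sit in consecutive squares $Q_{j}$ tiling a long thin rectangle, so their union is essentially a $1$-dimensional array of well-separated pieces, each of capacity $\gtrsim\delta$ and diameter $\le\sqrt{2}\,\delta$, with the $j$-th piece at distance roughly $j\delta$ from the first. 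For such a configuration one can build a measure $\mu$ on $\bigcup E_{j}$ with linear growth $\mu(B(x,r))\le r$ and bounded Menger curvature $c^{2}(\mu)\lesssim\|\mu\|$, with $\|\mu\|\asymp n\delta$: on each $E_{j}$ put a piece $\mu_{j}$ of mass $\asymp\delta$ witnessing $\alpha(E_{j})\gtrsim\delta$ (using that $\alpha$, or $\gamma_{+}$, of each $E_{j}$ is $\gtrsim\delta$ one gets such a $\mu_{j}$ with $c^{2}(\mu_{j})\lesssim\|\mu_{j}\|$), and then estimate the cross terms $\iiint$ over distinct $j,k,\ell$ using the separation $|z_{j}-z_{k}|\gtrsim |j-k|\delta$, which makes the triple integral converge like $\sum 1/(|j-k||j-\ell|\cdots)$ and stay $\lesssim n\delta$. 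Then Tolsa's theorem gives $\alpha\bigl(\bigcup E_{j}\bigr)\gtrsim \|\mu\|^{2}/(\|\mu\|+c^{2}(\mu))\asymp n\delta$, which is exactly what is needed. Alternatively, since the statement only needs a constant depending on $C_{0}$, one can quote the (bi-)Lipschitz invariance of $\alpha$ up to constants and reduce to a standard Cantor-type / linear-array configuration whose capacity is classically known to be comparable to its length.

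The main obstacle is precisely the lower bound $\alpha\bigl(\bigcup_{j}E_{j}\bigr)\gtrsim n\delta$: analytic capacity is only \emph{countably semiadditive} (Tolsa), so a priori $\alpha(\bigcup E_{j})$ could be much smaller than $\sum\alpha(E_{j})$ — that is the whole point of the lemma, that it cannot be, because the $E_{j}$ are geometrically separated along a line. Controlling the mixed curvature terms across the $n$ pieces uniformly in $n$, while keeping the total mass proportional to $n\delta$, is where the real work lies; the rest (the upper bound $\sum\alpha(E_{j})\lesssim n\delta$ and the comparison with $\mathcal{M}^{1}$) is routine. I would structure the write-up as: (1) normalize by scaling so $\delta=1$ and $R=[0,n]\times[0,1]$; (2) record the two-sided bound $C_{0}\le\alpha(E_{j})\le c$; (3) construct $\mu$ and verify linear growth and the curvature bound $c^{2}(\mu)\le C\,n$ with $C=C(C_{0})$; (4) apply Tolsa's theorem to conclude $\alpha(\bigcup E_{j})\ge c'n$; (5) combine to get $C_{1}=c/c'$.
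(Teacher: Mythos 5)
Your route is genuinely different from the paper's. The paper's (sketched) proof is constructive and runs through Vitushkin's localization scheme: from admissible functions $f_{j}$ for each $E_{j}$ one builds a single admissible function for $\bigcup_{j}E_{j}$ whose derivative at infinity picks up a fixed proportion of $\sum_{j}|f_{j}'(\infty)|$, the alignment of the $E_{j}$ entering through a modified triple zero lemma that controls the error terms in the sum of the localized pieces. You instead reduce everything to the two-sided bound $\sum_{j}\alpha(E_{j})\asymp n\delta$ and a capacity lower bound $\alpha(\bigcup_{j}E_{j})\gtrsim n\delta$ obtained from Tolsa's characterization via curvature of measures. Your reduction in steps (1), (2) and (5) is correct and clean, and the overall strategy is viable; it trades the delicate constructive scheme for the (much heavier, but by now standard) machinery of \cite{Tolsa1}, \cite{Tolsa2}.

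Two points in your step (3)--(4) need repair before this is a proof. First, linear growth plus $c^{2}(\mu)\lesssim\|\mu\|$ only bounds $\gamma_{+}$, hence the analytic capacity $\gamma$, from below --- not $\alpha$: a line segment carries such a measure and has positive $\gamma$ but $\alpha=0$. You must use the characterization of \emph{continuous} analytic capacity from \cite{Tolsa2}, which adds the requirement that $\mu$ have vanishing linear density at every point; you then need to extract the measures $\mu_{j}$ from the hypothesis $\alpha(E_{j})\geq C_{0}\delta$ (not merely $\gamma(E_{j})\geq C_{0}\delta$) so that each $\mu_{j}$, and hence the finite sum $\mu$, has vanishing density. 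Second, the cross-term estimate as you state it does not close: using only the separation $|z_{j}-z_{k}|\gtrsim|j-k|\delta$ and the crude bound $c(x,y,z)\leq 2/\max(|x-y|,|x-z|)$, the triple sum over distinct indices comes out of order $n\delta(\log n)^{2}$, not $n\delta$. To get the uniform bound you must exploit that all three points lie in a strip of width $\delta$ around a line, so that $\mathrm{dist}(x,L_{yz})\lesssim\delta\bigl(1+|j-k|/|l-k|\bigr)$ appears in the numerator of the Menger curvature; with that extra decay the sum over triples is indeed $O(n\delta)$. This is exactly where the alignment hypothesis is used, playing the role that the modified triple zero lemma plays in the paper's argument, so it cannot be waved away as a routine separation estimate.
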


\begin{proof}[Hint of the proof]
Given admissible functions $f_{j}$ for $\alpha$ and $E_{j}$, one can find a function $f$ admissible for $\alpha$ and $\bigcup_{j=1}^{n} E_{j}$ such that $\sum_{j}|f_{j}'(\infty)|=C_{1}|f'(\infty)|$ using {\em Vitushkin's localization scheme} with a modified {\em triple zero lemma} (see \cite{Verdera-nato} or \cite{Vitushkin}), where one uses the fact that the sets $E_{j}$ are aligned. Then, one can prove the lemma by taking supremums.
\end{proof}

From now on, we shall denote by $C$ an absolute constant that may change its value at different occurrences.

\begin{teo}\label{noint}
Let $K_{1},K_{2}\subseteq[0,1]$ be two compact sets and define $K=(K_{1}\times[0,1])\cup([0,1]\times K_{2})$. Suppose that $K_{1}$ has no interior. Then, $R(K)=A(K)$.
\end{teo}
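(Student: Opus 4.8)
The plan is to verify Vitushkin's criterion directly: I must show that $\alpha(D\setminus K)=\alpha(D\setminus\inte K)$ for every bounded open set $D$. Since $\alpha(D\setminus K)\le\alpha(D\setminus\inte K)$ trivially, the content of the proof is the reverse inequality, and it suffices to produce, for every compact $F\subseteq D\setminus\inte K$ and every admissible function $f$ for $\alpha$ and $F$, an approximation by functions admissible for sets contained in $D\setminus K$ with comparable derivative at infinity; equivalently, by semiadditivity of $\alpha$ it is enough to bound $\alpha(D\setminus\inte K)$ by $C\,\alpha(D\setminus K)$ with $C$ absolute, since Vitushkin's theorem in the form of \cite{Gamelin-unif} only needs the capacities to vanish simultaneously and more precisely one can reduce to showing $\alpha(V)\le C\alpha(V\setminus K)$ for small squares $V$. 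So I would first reduce, via a standard partition-of-unity/localization argument, to estimating $\alpha(Q\setminus\inte K)$ on a small dyadic (or triadic) square $Q$ in terms of $\alpha(Q\setminus K)$.

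The geometric heart is a description of $\inte K$. Because $K_1$ has no interior, $K_1\times[0,1]$ has empty interior, so $\inte K=\inte([0,1]\times K_2)\subseteq[0,1]\times K_2$; writing $U=[0,1]\times\inte K_2$ we have $\inte K=U$ up to a set of zero area and in fact $Q\setminus\inte K\supseteq Q\setminus K$ differs from it only by horizontal strips $[0,1]\times J$ over the complementary intervals $J$ of $K_2$ that are covered by the ``bar'' $K_1\times[0,1]$. So on such a strip the difference set $Q\cap(K_1\times J)$ sits inside $K_1\times[0,1]$, which is a vertical bar set. The key point is that $K_1\times[0,1]$ has $\sigma$-finite length only if $K_1$ is finite, so in general one cannot just throw it away by a length estimate; instead one uses that it is a union of vertical segments $\{x\}\times[0,1]$, $x\in K_1$, and the set $Q\setminus K$ still contains, near any such segment, plenty of capacity coming from the horizontal bars $[0,1]\times K_2$ passing through — this is exactly the alignment situation of the auxiliary Lemma.

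Concretely, I would fix a small square $Q=I\times I'$ with $I$ an interval of length $\delta$, tile the column $I\times[0,1]$ (or the relevant part of it) by $n\approx 1/\delta$ squares $Q_j$ of side $\delta$, and note that for each $j$ the set $E_j:=Q_j\cap([0,1]\times K_2)$, when it meets $\inte K$, actually contains a full horizontal segment $I\times\{y\}$ and hence $\alpha(E_j)\ge c\delta$ (capacity of a segment is comparable to a quarter of its length). These $E_j\subseteq Q_j$ are aligned in the vertical rectangle $I\times[0,1]$, so the auxiliary Lemma gives $\sum_j\alpha(E_j)\le C\,\alpha(\bigcup_j E_j)\le C\,\alpha(I\times[0,1]\setminus\inte K\text{'s complement in }Q)$... more carefully: $\bigcup_j E_j\subseteq Q\setminus\inte K$ is contained in $Q\setminus K$ wherever the horizontal bar is ``free'', i.e. not blocked by $K_1\times[0,1]$, and the blocked part projects to $K_1\cap I$ which, $K_1$ having no interior, carries no segment — so the blocked capacity is dominated by the free one through the same alignment lemma applied in the horizontal direction. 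Iterating/combining the horizontal and vertical alignment estimates yields $\alpha(Q\setminus\inte K)\le C\,\alpha(Q\setminus K)$ with $C$ absolute, and summing over a covering of $D$ via Vitushkin's localization gives $\alpha(D\setminus\inte K)\le C\,\alpha(D\setminus K)\le C\,\alpha(D\setminus\inte K)$, whence (together with the semiadditivity of $\alpha$, \cite{Tolsa1}) equality of the two capacities for all $D$, so $R(K)=A(K)$ by Vitushkin's theorem.

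The main obstacle I expect is making the ``blocked vs.\ free'' bookkeeping on a fixed scale $\delta$ genuinely uniform: the complementary intervals of $K_2$ come in all sizes, and on a single square $Q$ one sees many of them, so the reduction to the single-rectangle auxiliary Lemma has to be organized by a Whitney-type decomposition of $[0,1]^2\setminus K$ into rectangles adapted to $K_2$'s gaps, with the alignment lemma applied separately on each Whitney rectangle and the constants tracked through Vitushkin's scheme. Controlling the overlaps of these rectangles and verifying that each application of the auxiliary Lemma indeed has its hypothesis $\alpha(E_j)\ge C_0\delta$ satisfied — which is where ``$K_1$ has no interior'' is used, to guarantee a full segment of the horizontal bar survives in $Q\setminus K$ — is the delicate part; the rest is the routine machinery of Vitushkin's localization and the semiadditivity of analytic capacity.
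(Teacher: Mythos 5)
You have the right two ingredients---Vitushkin's criterion reduced to squares and the alignment lemma---but the argument as organized does not close, and the step you defer as ``the delicate part'' is precisely where it breaks. Two concrete problems. First, your aligned sets $E_j=Q_j\cap([0,1]\times K_2)$ are subsets of $K$, so no application of the alignment lemma to them can yield what you ultimately need, namely a lower bound on $\alpha(Q\setminus K)$: a set sitting inside $K$ gives no information about the capacity of $Q\setminus K$. Second, the ``blocked versus free'' domination is not routine bookkeeping. Take $K_1$ a fat Cantor set (empty interior, positive length): the blocked part $Q\cap(K_1\times[0,1])$ then has continuous analytic capacity comparable to $\ell(Q)$, i.e.\ as large as $\alpha(Q)$ itself, so ``dominating the blocked capacity by the free one'' is equivalent to the full strength of the theorem, and no Whitney decomposition adapted to the gaps of $K_2$ reduces it to the single-rectangle lemma.

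The paper's proof avoids all of this: no description of $\inte K$ is needed, because the trivial upper bound $\alpha(Q\setminus\inte K)\le\alpha(Q)\le C\,\ell(Q)$ suffices once one proves the matching lower bound $\alpha(Q\setminus K)\ge c\,\ell(Q)$. That lower bound is where the hypothesis enters, and it is a single application of the alignment lemma in one direction only: since $Q\setminus K\ne\emptyset$ it contains a square, hence an interval $F_y\subseteq\pi_y(Q)\setminus K_2$ of length $\ell(Q)/n$ for some $n$; split $\pi_x(Q)$ into $n$ intervals $I_j$ of that length and use that $K_1$ has empty interior to choose $F_x^j\subseteq I_j\setminus K_1$; each rectangle $F_x^j\times F_y$ lies in $Q\setminus K$ and contains a vertical segment of length $\ell(Q)/n$, so $\alpha(F_x^j\times F_y)\ge C_0\,\ell(Q)/n$, and the lemma applied to the horizontal row of squares $I_j\times F_y$ gives $c\,\ell(Q)\le\sum_j\alpha(F_x^j\times F_y)\le C\,\alpha(Q\setminus K)$. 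You were one observation away: you do not need to understand $Q\setminus\inte K$ at all, only to show that $Q\setminus K$ already carries a full square's worth of capacity.
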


\begin{proof}
By Vitushkin's theorem, it is known that $R(K)=A(K)$ if and only if there exists an absolute constant $C>0$ such that $\alpha(Q\setminus \inte K)\leq C\alpha(Q\setminus K)$ for all open squares $Q$.

Fix a square $Q$ of side length $l>0$. We can suppose that $Q\setminus K$ is not empty, so there exists a square $F\subseteq Q\setminus K$. Let $\pi_{x}$ and $\pi_{y}$ be the projections onto the horitzontal and vertical coordinate axis respectively. Then, $\pi_{y}(F)\subseteq\pi_{y}(Q)\setminus K_{2}$ and we can find an interval $F_{y}\subseteq \pi_{y}(F)$ of length $l/n$ for $n$ big enough.

On the other hand, if we split $\pi_{x}(Q)$ into intervals $I_{j}$ for $j=1,\ldots,n$ with pairwise disjoint interiors and length $l/n$, we can also find intervals $F_{x}^{j}\subseteq (\pi_{x}(Q)\setminus K_{1})\cap I_{j}$ for $j=1,\ldots,n$, because $K_{1}$ has no interior. Therefore, $\bigcup_{j=1}^{n}(F_{x}^{j}\times F_{y})\subseteq Q\setminus K$ and
$\alpha(F_{x}^{j}\times F_{y})\geq C_{0}\,l/n$.

Now we are ready to use the preceding lemma with the squares $Q_{j}=F_{y}\times I_{j}$, the subsets $E_{j}=F_{x}^{j}\times F_{y}$ and $\delta=l/n$, and we obtain
$$l\leq C\sum_{j=1}^{n}\alpha(F_{x}^{j}\times F_{y})\leq C\alpha(\bigcup_{j=1}^{n}(F_{x}^{j}\times F_{y}))\leq C\alpha(Q\setminus K).$$
We can finally deduce that
$$\alpha(Q\setminus \inte K)\leq\alpha(Q)=C\,l\leq C\alpha(Q\setminus K)$$
for every open square $Q$, so $R(K)=A(K)$.
\end{proof}

We are grateful to Anthony O'Farrell for the communication of another proof of theorem \ref{noint} which uses annihilating measures instead of Vitushkin's theorem.

\section*{Acknowledgment}
The author gratefully acknowledges Mark Melnikov and Xavier Tolsa for the communication of the main question \ref{ques} and for useful discussions while preparing this paper.

\end{document}